\newtheorem{theorem}{Theorem}[section]
\newtheorem{proposition}[theorem]{Proposition}
\newtheorem{corollary}[theorem]{Corollary}
\newtheorem{lemma}[theorem]{Lemma}
\theoremstyle{definition}
\newtheorem{defi}[theorem]{Definition}
\newtheorem{example}[theorem]{Example}
\newtheorem{remark}[theorem]{Remark}
\newtheorem{problem}[theorem]{Problem}
\newcommand{\R}{\ensuremath{\mathbb{R}}}
\begin{document}



\subjclass[2000]{Primary 57Q45; Secondary 57R45, 57R40, 57M25.}
\date{\today}
\keywords{Surface-link; Diagram; Roseman move; $S$-dependence.}


\title[Roseman moves including triple points]{
Independence of Roseman moves including triple points}


\author{Kengo Kawamura}
\address{Department of Mathematics, Osaka City University, 
3-3-138 Sugimoto-cho, Sumiyoshi-ku, Osaka 558-8585, Japan}
\email{k.kawamura0403@gmail.com}

\author{Kanako Oshiro}
\address{Department of Information and Communication Sciences, Sophia University, 
7-1 Kioi-cho, Chiyoda-ku, Tokyo 102-8554, Japan}
\email{oshirok@sophia.ac.jp}

\author{Kokoro Tanaka}
\address{Department of Mathematics, Tokyo Gakugei University, 
4-1-1 Nukuikita-machi, Koganei-shi, Tokyo 184-8501, Japan}
\email{kotanaka@u-gakugei.ac.jp}



\begin{abstract}
Roseman moves are seven types of local modification for surface-link diagrams in $3$-space 
which generate ambient isotopies of surface-links in $4$-space. 
In this paper, we focus on Roseman moves involving triple points, 
one of which is the famous tetrahedral move, and discuss their independence. 
For each diagram of any surface-link, 
we construct a new diagram of the same surface-link such that 
any sequence of Roseman moves between them must contain moves involving triple points 
(and the numbers of triple points of the two diagrams are the same). 
Moreover, we can find a pair of two diagrams of an $S^2$-knot 
such that any sequence of Roseman moves between them 
must involve at least one tetrahedral move. 
\end{abstract}

\maketitle


\section{Introduction}\label{sec:intro}

A \textit{surface-link} (or a \textit{$\Sigma^2$-link}) is 
a submanifold of $4$-space $\R^4$, 
homeomorphic to a closed surface $\Sigma^2$. 
If it is connected, then it is called a \textit{surface-knot} (or a \textit{$\Sigma^2$-knot}). 
Surface-links are not necessarily assumed to be orientable in this paper. 
Two surface-links are said to be \textit{equivalent} 
if they can be deformed into each other through an isotopy of $\R^4$.

A \textit{diagram} of a surface-link is its image 
via a generic projection from $\R^4$ to $\R^3$, 
equipped with the height information as follows: 
At a neighborhood of each double point, there are 
intersecting two disks such that one is higher than the other 
with respect to the $4$th coordinate dropped by the projection. 
Then the height information is indicated by removing 
the regular neighborhood of the double point in the lower disk 
along the double point curves. 
A diagram 
is regarded as a disjoint union of connected compact orientable surfaces, 
each of which is called a \textit{sheet}, and 
is composed of four kinds of local pictures shown in Figure~\ref{fig:diagram}, 
each of which is the image of a neighborhood of a typical point --- 
a regular point, a \textit{double point}, 
an isolated \textit{triple point} or an isolated \textit{branch point}. 
\begin{figure}[htbp]
\setlength\unitlength{0.70\textwidth}
\begin{center}\begin{picture}(1,0.27)%
\put(0,0.05){\includegraphics[width=\unitlength]{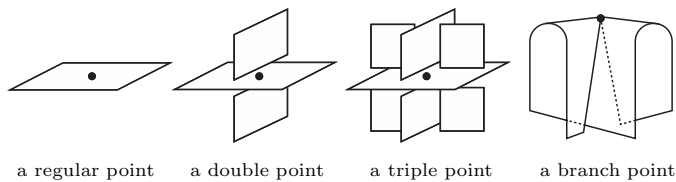}}
\put(0.015,0){\mbox{\scriptsize a regular point}}%
\put(0.275,0){\mbox{\scriptsize a double point}}%
\put(0.545,0){\mbox{\scriptsize a triple point}}%
\put(0.800,0){\mbox{\scriptsize a branch point}}%
\end{picture}\end{center}%
\caption{Local pictures of the projection image}
\label{fig:diagram}
\end{figure}

Two surface-link diagrams are said to be \textit{equivalent} 
if they are related by (ambient isotopies of $\R^3$ and) 
a finite sequence of seven types of Roseman moves, 
shown in Figure~\ref{fig:roseman}, where we omit height information for simplicity 
and 
the symbols \lq\lq B\rq\rq , \lq\lq T\rq\rq\ and \lq\lq D\rq\rq\ 
stand for \lq\lq branch point\rq\rq , \lq\lq triple point\rq\rq\ and 
\lq\lq double point curve\rq\rq\ respectively. 
D. Roseman \cite{Ros-95} proved that two surface-links are equivalent if and only if 
they have equivalent diagrams. 
We refer to \cite{CKS-book, CS-book} for more details on surface-links and 
their diagrams. 

In this paper, we focus on Roseman moves involving triple points, 
that is, the three moves of type $T1$, $T2$ and $BT$, and discuss their independence. 
The move of type $T2$ is also called the \textit{tetrahedral move} 
and closely related to the Zamolodchikov equation, 
which is a higher dimensional analogue of the Yang-Baxter equation 
(see \cite[Chapter 6]{CS-book} for details). 
\begin{figure}[htbp]\begin{center}
\includegraphics[width=1.0\textwidth]{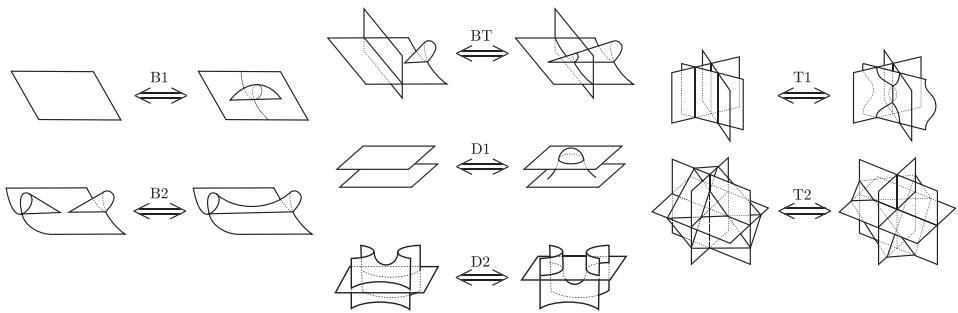}
\caption{Roseman moves}
\label{fig:roseman}
\end{center}\end{figure}

\subsection{Problem}
Independence of Roseman moves has already been well understood as local moves. 
We summarize the known results below. 
The first and third were proved in \cite{Kawamura} 
and the second was proved in \cite{HN, Yashiro}. 
\begin{itemize}
\item
Type $B1$ can be realized by a finite sequence of types $B2$ and $D1$. 
\item 
Type $B2$ can be realized by a finite sequence of types $B1$ and $D2$. 
\item
Any types except $B1$ and $B2$ cannot be realized by the other six types. 
\end{itemize}
However these results do not give us an answer for the following question: 
For two diagrams of a surface-link, what types should be appeared in a sequence 
of Roseman moves between them? 
For example, we can consider the following problem.

\begin{problem}\label{prob:main}\mbox{}
\begin{enumerate}
\item
Are there two diagrams of a surface-link such that 
any sequence of Roseman moves between them must contain moves involving branch points?

\item
Are there two diagrams of a surface-link such that 
any sequence of Roseman moves between them must contain moves involving triple points?
\end{enumerate}\end{problem}

Many studies on Problem~\ref{prob:main}(1) have been made (\cite{OT, Sat-01, TakaseT}). 
On the contrary, there are a few results on Problem~\ref{prob:main}(2), 
for example in \cite{Jab}. 
To make the problem concrete, we define the notion of the \textit{$S$-dependence} 
of diagrams for a subset $S$ of the set consisting of seven types of Roseman moves, 
and formurate our problem. 
\begin{defi}
For a subset $S$ of the set $\{B1, B2, D1, D2, T1, T2, BT\}$ 
consisting of seven types of Roseman moves, 
two diagrams of a surface-link are said to be \textit{$S$-dependent} if 
any sequence of Roseman moves between them contains at least one move in $S$. 
\end{defi}
\begin{problem}\label{prob:main2}
For a subset $S$ of the set $\{B1, B2, D1, D2, T1, T2, BT\}$,  
are there two diagrams of a surface-link such that they are $S$-dependent? 
We note that if we choose $S$ as $\{B1,B2,BT\}$ (resp. $\{T1,T2,BT\}$) 
then the problem is equivalent to Problem~\ref{prob:main}(1) (resp. (2)). 
We also note that if $S' \subset S$ then an $S'$-dependent pair is $S$-dependent by definition. 
Since we are now interested in Roseman moves involving triple points, 
we will take $S$ as a subset of $\{T1, T2, BT\}$ in what follows. 
\end{problem}

\subsection{Results}
M.~Jab{\l}onowski \cite{Jab} 
observed a $\{T1, T2, BT\}$-dependence of surface-link diagrams, and showed that 
there is a pair of two diagrams of the trivial $(S^2 \cup T^2)$-link 
such that the pair is $\{T1,T2,BT\}$-dependent 
and each of two diagrams has no triple points. 
Surface-link diagrams which he constructed are oriented, and 
have multi-components and positive genus, which were crucial conditions in his proof. 
In Section~\ref{sec:T1-T2}, 
we will generalize his result for any surface-link (including  unoriented surface-links, 
surface-knots and $S^2$-links), and prove the following. 
\begin{theorem}\label{thm:T1-T2}
For each diagram $D$ of any surface-link $F$, 
there is a diagram $D'$ of $F$ such that 
the pair of $D$ and $D'$ is $\{T1,T2\}$-dependent and 
the number of triple points of $D$ is equal to that of $D'$. 
\end{theorem}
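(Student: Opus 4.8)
The plan is to construct $D'$ from $D$ by performing a carefully chosen sequence of Roseman moves that uses exactly one $T1$ move (or one $T2$ move) in an essential way, so that the "difference" between $D$ and $D'$ records something that only a triple-point move can change. The natural invariant-theoretic framework for this is to find a quantity, defined on diagrams, that is invariant under all Roseman moves \emph{except} $T1$ and $T2$, but changes under $T1$ and $T2$. Following the philosophy of Jabłonowski's argument and the known $S$-dependence results, I expect the relevant quantity to be something like a coloring-type count or a cocycle-weighted sum of triple points taken modulo an appropriate equivalence, where the modulus is sensitive precisely to the tetrahedral-type relations. So the first step is: identify a target invariant $I$ such that $I(D) \ne I(D')$ forces any connecting sequence to contain a $T1$ or $T2$ move.

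\smallskip

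Concretely, I would proceed as follows. First, take an arbitrary diagram $D$ of $F$ and introduce a small "model" local configuration somewhere on a sheet of $D$ away from all existing multiple points — for instance, push a finger of one sheet through another sheet and then through a double point curve so as to create a pair of triple points connected in a controlled way, then cancel them using $BT$ and $D$-type moves only in a way that leaves a residual effect detectable by $I$. The key point is to arrange that the \emph{net} creation/annihilation of triple points is zero (so the triple-point counts of $D$ and $D'$ agree, as required by the theorem statement), while the combinatorial "type" of the triple points, as measured by $I$, has been altered. Second, verify that this construction indeed yields a diagram $D'$ of the \emph{same} surface-link $F$ — this is automatic if $D'$ is obtained from $D$ by Roseman moves, but one must be slightly careful because the whole point is that these moves should be reconstructible in more than one way; the cleanest route is to build $D'$ via an explicit sequence that is transparently a Roseman sequence (possibly involving $T1$ or $T2$), and only afterwards argue that no sequence avoiding $\{T1,T2\}$ can connect them. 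Third, prove the $\{T1,T2\}$-dependence: show that $I$ is invariant under $B1, B2, D1, D2, BT$ and ambient isotopy, by checking each of these five move types against the local definition of $I$, and then show $I(D) \ne I(D')$ by direct computation on the model configuration.

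\smallskip

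**The main obstacle** I anticipate is constructing an invariant $I$ that is simultaneously (a) invariant under $BT$ — this is the subtle one, since $BT$ involves both a branch point and a triple point and naively any triple-point-counting quantity will be disturbed by it — and (b) genuinely \emph{not} invariant under $T1$ and $T2$, while (c) still being computable and nontrivial on a completely general surface-link diagram $D$ with no orientability, connectivity, or genus hypotheses. Jabłonowski's original argument leaned on orientation and positive genus precisely to have enough room for such an invariant; removing those hypotheses means $I$ must be defined using more robust data — perhaps a quandle or symmetric-quandle coloring together with a suitable $2$- or $3$-cocycle, or a $\Z/2$-valued count of triple points weighted by a locally-defined sign that is forced to be $BT$-stable by a parity argument. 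Getting this algebraic gadget to behave correctly under $BT$ while retaining sensitivity to the tetrahedral relation is where the real work lies; once $I$ is in hand, the move-by-move invariance checks and the model-configuration computation should be routine, and the equality of triple-point numbers is built into the construction by design.
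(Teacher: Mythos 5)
Your high-level strategy --- find a quantity invariant under all Roseman moves except $T1$ and $T2$, and exhibit two diagrams of $F$ on which it differs --- is exactly the paper's strategy, and your guess that the quantity should be ``a coloring-type count'' is the right one. But the proposal stops short of the two concrete ingredients that constitute the actual proof, and the construction you sketch in their place would not go through as described. First, the invariant: the paper takes the coloring number $\#\mathrm{Col}_X(\cdot)$ for an explicit $3$-element involutory algebra $X$ satisfying the quandle axioms (Q1) and (Q2) but not (Q3). Because (Q1) alone governs invariance under $B1$, $B2$ \emph{and} $BT$, and (Q2) governs $D1$ and $D2$, the $BT$-invariance you flag as ``the subtle one'' is automatic --- that difficulty only arises for cocycle-weighted sums over triple points (which the paper does face, but in the separate $\{T2\}$-dependence argument of Section~3, not here). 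Your proposal leaves the invariant unspecified (``perhaps a quandle \dots or a $\Z/2$-valued count \dots''), and producing a workable $X$ is not routine: one must check it violates (Q3) and, to cover non-orientable surface-links, that it is involutory so that colorings of unoriented diagrams make sense. That orientability issue is not addressed in your sketch.

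Second, the construction of $D'$: you propose to modify $D$ locally by an explicit Roseman sequence (finger moves, creating and cancelling triple points) and hope a ``residual effect'' survives. This is both vague and unnecessary. The paper instead takes $D' = D \sharp D_1$, a connected sum with a fixed diagram $D_1$ of the \emph{trivial} $S^2$-knot that has no triple points but satisfies $\#\mathrm{Col}_X(D_1) = 7 > 3 = \#\mathrm{Col}_X(D_0)$. Since $D_1$ represents the unknotted sphere, $D'$ represents $F$ (no explicit move sequence is needed --- Roseman's theorem supplies one); the triple-point count is unchanged because $D_1$ has no triple points; and a counting argument on the sheet where the sum is performed gives $\#\mathrm{Col}_X(D') \geq 2\,\#\mathrm{Col}_X(D) > \#\mathrm{Col}_X(D)$, which settles $\{T1,T2\}$-dependence for \emph{every} $D$ at once. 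Without the explicit algebra $X$, the gadget $D_1$, and this connected-sum step, the proposal is a plan rather than a proof; the gap is precisely the content of the theorem.
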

The first author \cite{Kawamura} observed 
the independence of the moves of types $T1$ and $T2$ as local moves, and showed that 
there is a pair of two diagrams of the trivial $S^2$-link with three (resp. four) components 
such that the pair is $\{T1\}$-dependent (resp. $\{T2\}$-dependent). 
Note that his proof does not work well for surface-links with two (resp. three) or less components. %
In Section~\ref{sec:T2}, 
we will give the first example for the $\{T2\}$-dependence
in the case of an $S^2$-knot, and prove the following. 
\begin{theorem}\label{thm:T2}
There is a pair of two diagrams of an $S^2$-knot such that 
the pair is $\{T2\}$-dependent. 
In other words, any sequence of Roseman moves between them 
must involve at least one tetrahedral move. 
\end{theorem}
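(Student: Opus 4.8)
The plan is to find a suitable quandle-theoretic (or, more generally, coloring-theoretic) invariant whose ``cocycle weight'' sees the tetrahedral move but is literally unchanged by every other Roseman move, and then exhibit two diagrams of a single $S^2$-knot on which that weight differs. Concretely, I would fix a finite quandle $X$ and a \emph{ternary} quandle $3$-cocycle $\theta$ (a Zamolodchikov-type cocycle), and for a diagram $D$ define the state-sum
\[
\Phi_\theta(D) \;=\; \sum_{\mathcal{C} \in \Col_X(D)} \ \prod_{\tau} \ \theta(\mathcal{C})^{\epsilon(\tau)},
\]
the product running over triple points $\tau$ of $D$. The point is to track how $\Phi_\theta$ (or, better, the multiset of triple-point weights) behaves under each of the seven Roseman moves: moves not involving triple points ($B1,B2,D1,D2$) do not change the set of triple points at all and, once one checks colorings match up, leave every weight untouched; the moves $T1$ and $BT$ create/cancel triple points \emph{in canceling pairs} whose weights are mutually inverse, again leaving the total unchanged; but the tetrahedral move $T2$ replaces a configuration of four triple points by another four, and the equality of the two products is exactly the Zamolodchikov (pentagon-type) cocycle condition. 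So the strategy is: (i) isolate an invariant feature of the triple-point weight collection that is preserved by $B1,B2,D1,D2,T1,BT$ for formal reasons, and (ii) arrange for $T2$ to be forced by producing two diagrams on which that feature genuinely differs.

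First I would recall (or re-derive in the surface-link setting) the needed algebraic input: ternary quandles / associative structures and their Zamolodchikov $3$-cocycles, following the framework in \cite[Chapter 6]{CS-book}, and pin down precisely which of the seven Roseman moves leave a chosen weight-functional invariant. Here the key bookkeeping is that $T1$ and $BT$ always introduce triple points two at a time with opposite signs and identical colorings around them, so any antisymmetric-in-sign combination of weights is blind to them; and the reordering moves and branch moves never touch triple points. This reduces Theorem~\ref{thm:T2} to: build two diagrams $D_0,D_1$ of one $S^2$-knot with a chosen $X$-coloring set up so that the $T2$-sensitive functional disagrees.

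Next I would construct the pair of diagrams. A natural source is to take a diagram $D_0$ of an $S^2$-knot containing a ``tetrahedral'' cluster of four triple points realizing one side of the Zamolodchikov move with a non-trivial cocycle contribution, and let $D_1$ be the diagram obtained by actually performing the tetrahedral move once (so $D_0$ and $D_1$ are equivalent surface-link diagrams by construction, representing the same $S^2$-knot). Then I must verify two things: (a) the chosen $\theta$ is genuinely a non-coboundary on the relevant coloring, so that the weight functional really differs on $D_0$ and $D_1$ — this is a finite computation once $X$ and $\theta$ are fixed; and (b) the weight functional is invariant under all Roseman moves except $T2$, which is the structural argument from the previous paragraph. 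Combining (a) and (b): any sequence of Roseman moves from $D_0$ to $D_1$ that avoided $T2$ would preserve the functional, contradicting (a); hence every such sequence contains at least one tetrahedral move, i.e. the pair is $\{T2\}$-dependent.

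The main obstacle I anticipate is twofold. First, finding an explicit small finite quandle $X$ together with a ternary $3$-cocycle $\theta$ that is \emph{not} cohomologous to something which is insensitive to $T2$ — i.e., genuinely detecting the tetrahedral move on a connected, genus-$0$ diagram — is delicate, precisely because earlier invariant-based arguments (the first author's, and Jab{\l}onowski's) needed multiple components or positive genus to get enough room; making it work for an honest $S^2$-knot is the crux. Second, one has to be careful that the functional is invariant under $T1$ and $BT$ \emph{including all the height/branch information} suppressed in Figure~\ref{fig:roseman}: the canceling-pair argument must be checked against every local height configuration of these moves, and against the branch-point bookkeeping in $BT$, since a naive version of the weight could fail to be symmetric there. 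I would spend most of the write-up on these two points; everything else (that $D_0$ and $D_1$ represent the same $S^2$-knot, that the $B$- and $D$-moves are irrelevant) is essentially formal.
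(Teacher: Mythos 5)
Your overall strategy coincides with the paper's: define a Boltzmann-type weight $W_\theta(D,C)$ from a quandle coloring and a function $\theta\colon Q^3\to A$ chosen to satisfy exactly those conditions that force invariance under $B1,B2,D1,D2,T1,BT$ but not under $T2$ (in the paper's language, the quandle $3$-cocycle condition (i) but not (ii)), and then exhibit two equivalent $S^2$-knot diagrams on which the resulting invariant differs. Two corrections to your structural bookkeeping. First, $T1$ does create a canceling pair of triple points with identical colors and opposite signs, but $BT$ creates a \emph{single} triple point in which two of the three colors coincide; its invariance is not a cancellation phenomenon but is exactly what condition (i), $\theta(a,a,b)=\theta(a,b,b)=0$, is for. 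Second, a single state-sum as in your display can have the per-coloring differences of Lemma~\ref{lemma:condition(ii)} cancel across colorings; one should keep the multiset $\{W_\theta(D,C)\}_{C}$, and since an $S^2$-knot diagram carries no preferred orientation one must also take the multiset over both orientations, as the paper does with $\Phi_\theta^{\rm unori}$. You half-suggest the multiset refinement but do not address orientations.

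The genuine gap is that the theorem is an existence statement and your proposal defers precisely the existence. You never produce the finite quandle, the function $\theta$ violating the $T2$-condition, or the pair of diagrams; you correctly flag this as ``the crux'' but leave it as a hope. Your suggested construction --- take $D_0$ containing a tetrahedral cluster and let $D_1$ be the result of one $T2$-move --- is logically sound \emph{if} it can be realized, but realizing it requires a connected genus-zero diagram carrying a coloring at which the six-term expression of Lemma~\ref{lemma:condition(ii)} is nonzero and at which the discrepancy survives passage to the multiset over all colorings and both orientations; nothing in your outline guarantees this is possible, and indeed you note that all prior invariant-based arguments needed extra components or genus. The paper's solution is entirely concrete: the tetrahedron quandle $S_4$ with $\theta(x,y,z)=(x-y)(y-z)$, applied to the $3$-twist-spun trefoil diagram (eighteen triple points, every weight sum equal to $0$) and the half-roll-spun figure-eight diagram (which admits a coloring of weight $12$), together with the nontrivial external fact, due to Kanenobu and Teragaito, that $\tau^3(3_1)$ and $\rho^{\frac12}(4_1)$ are equivalent $S^2$-knots. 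Without some such explicit input your argument does not close.
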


Here are some questions for future research.

\begin{problem}
For each diagram $D$ of any surface-link $F$, 
is there a diagram $D'$ of $F$ such that the pair of $D$ and $D'$ is $\{T2\}$-dependent?
\end{problem}

\begin{problem}\label{prob:T1}
Is there a pair of two diagrams of a surface-link with two or less components 
such that the pair is $\{T1\}$-dependent? (See Remark~\ref{rem:T1}.) 
\end{problem}

\begin{problem}
For each diagram $D$ of any surface-link $F$, 
is there a diagram $D'$ of $F$ such that the pair of $D$ and $D'$ is $\{T1\}$-dependent?
\end{problem}

\section{$\{T1, T2\}$-dependent diagram pair}\label{sec:T1-T2}

In this section, we study $\{T1,T2\}$-dependences of equivalent surface-link diagrams using the notion of coloring numbers, and prove Theorem \ref{thm:T1-T2}. 
Throughout this paper, for a surface-link diagram $D$, let $\mathcal{S}_D$ denote the set of all sheets of $D$. Moreover, 
we represent the orientation of a surface-knot diagram by assigning its co-orientation, depicted by an arrow which looks like the symbol \lq\lq $\Uparrow$\rq\rq as in Figure~\ref{fig:coloring_condition}, to each sheet of the diagram. 

\subsection{Coloring of surface-link diagrams}
Let $D$ be an oriented surface-link diagram and $\Omega$ a (non-empty) set with a binary operation $*:\Omega\times\Omega \rightarrow \Omega$.
We say that a map $C:\mathcal{S}_D \rightarrow \Omega$ is an {\it $\Omega$-coloring} of $D$ if it satisfies the {\it coloring condition} $C(s_i)*C(s_j)=C(s_k)$ for each double point curve of $D$, 
where $s_i$, $s_j$ and $s_k$ are three sheets meeting at the double point curve such that 
the co-orientation of $s_j$ points from $s_i$ to $s_k$ as in Figure~\ref{fig:coloring_condition}. 
\begin{figure}[htbp]\begin{center}
\begin{minipage}[]{0.30\hsize}\setlength\unitlength{\hsize}\begin{picture}(1,0.9)%
\put(0,0.05){\includegraphics[width=\unitlength]{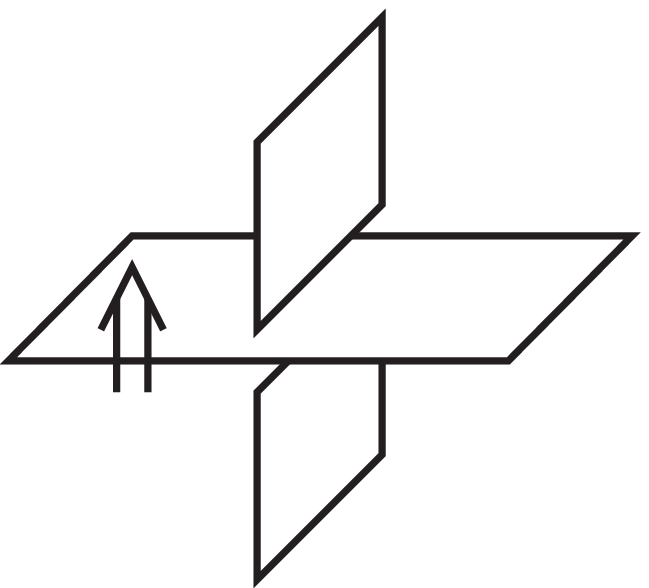}}
\put(0.420,0.250){\mbox{\huge $s_i$}}%
\put(0.605,0.500){\mbox{\huge $s_j$}}%
\put(0.420,0.680){\mbox{\huge $s_k$}}%
\end{picture}\end{minipage}
\qquad{\Large $C(s_i) * C(s_j) = C(s_k)$}
\caption{Coloring condition along a double point curve}
\label{fig:coloring_condition}
\end{center}\end{figure}
Note that there might be no $\Omega$-colorings of $D$ for given $D$ and $\Omega$. 
For an $\Omega$-coloring $C$, the image $C(s)$ of a sheet $s$ is called the {\it color} of $s$.

An $\Omega$-coloring is said to be {\it trivial} if it is a constant map.
An oriented surface-link diagram has a trivial $\Omega$-coloring if and only if there exists an element $a\in\Omega$ such that $a*a=a$.
Let $\mathrm{Col}_\Omega(D)$ denote the set of all $\Omega$-colorings of $D$.
If $\Omega$ is finite, then we can count the number of the elements of $\mathrm{Col}_\Omega(D)$, and we call it the {\it $\Omega$-coloring number} of $D$ and denote it by $\#\mathrm{Col}_\Omega(D)$.
Note that an $\Omega$-coloring number may depend on choice of diagram for an oriented surface-link.

We say that $\Omega$ is {\it involutory} if for any $a,b \in\Omega$, $(a*b)*b=a$ holds. 
If $\Omega$ is involutory, the coloring condition does not depend on the co-orientation of $s_j$, since $C(s_i)*C(s_j)=C(s_k)$ if and only if $C(s_i)=C(s_k)*C(s_j)$. Then the $\Omega$-coloring can be defined for an unoriented diagram of a (possibly non-orientable) surface-link.

\subsection{Quandle and $\{T1, T2\}$-dependence}
The notion of quandles was introduced by Joyce \cite{Joy-82} and Matveev \cite{Mat-82}.
A quandle is a set $Q$ with a binary operation $*:Q \times Q \rightarrow Q$ satisfying 
the following three axioms.
\begin{enumerate}
\item[(Q1)] For any $a \in Q$, $a*a=a$.
\item[(Q2)] For any $a,b \in Q$, there exists a unique $c \in Q$ such that $c*b=a$.
\item[(Q3)] For any $a,b,c \in Q$, $(a*b)*c=(a*c)*(b*c)$.
\end{enumerate}
The axioms (Q1), (Q2) and (Q3) correspond to Reidemeister moves of types I, II and III respectively.

\begin{example}\label{exa:S_4}
Let $S_4$ denote the set $\{0,1,2,3\}$ with the binary operation defined by the following table.
$$
\begin{array}{rc}
S_4:
&
\begin{tabular}{c||c|c|c|c}
  & 0 & 1 & 2 & 3\\ \hline \hline
0 & 0 & 2 & 3 & 1\\ \hline
1 & 3 & 1 & 0 & 2\\ \hline
2 & 1 & 3 & 2 & 0\\ \hline
3 & 2 & 0 & 1 & 3\\       
\end{tabular}
\end{array}
$$
In this table, the $(i+1,j+1)$-entry means $i*j$.
Then $S_4$ satisfies the three axioms 
and we call it the {\it tetrahedron quandle}, 
which will be used in Section~\ref{sec:T2}. 
\end{example}

For a diagram $D$ of an oriented surface-link $F$, 
it is known that the $Q$-coloring number $\#\mathrm{Col}_Q(D)$ is an invariant of $F$ 
for a finite quandle $Q$ (cf.~\cite{CKS-book}). More precisely, it is known that 
\begin{itemize}
\item 
the invariance under Roseman moves of types~$B1$, $B2$ and $BT$ 
comes from the first axiom (Q1), 

\item
the invariance under Roseman moves of types~$D1$ and $D2$ 
comes from the second axiom (Q2), and 

\item
the invariance under Roseman moves of types~$T1$ and $T2$ 
comes from the third axiom (Q3).
\end{itemize}
Hence if we consider the case where an algebra $\Omega$ satisfies quandle axioms (Q1) and (Q2) 
(but not (Q3)), then we have the following. 
\begin{lemma}\label{lem:T1-T2}
Let $\Omega$ be an algebra satisfying quandle axioms $(Q1)$ and $(Q2)$ $($but not $(Q3)$$)$.
If diagrams $D$ and $D'$ of an oriented surface-link are not $\{T1,T2\}$-dependent, 
then $\#\mathrm{Col}_\Omega(D)=\#\mathrm{Col}_\Omega(D')$ holds.
\end{lemma}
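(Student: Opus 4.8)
The plan is to show that the $\Omega$-coloring number is invariant under every Roseman move except possibly those of types $T1$ and $T2$; once that is established, the contrapositive of the statement follows immediately, since a sequence of Roseman moves connecting $D$ and $D'$ that avoids $T1$ and $T2$ would be a chain of moves each preserving $\#\mathrm{Col}_\Omega$, forcing $\#\mathrm{Col}_\Omega(D)=\#\mathrm{Col}_\Omega(D')$.

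First I would set up the local correspondence of colorings: for a diagram $D$ and a move applied inside a $3$-ball $B$, the sheets of $D$ outside $B$ are in natural bijection with the sheets of the modified diagram $D'$ outside $B$, and the coloring condition is a local condition imposed along double point curves. So it suffices to check, for each of the five move types $B1$, $B2$, $D1$, $D2$, $BT$, that every $\Omega$-coloring of the ``before'' side of the move extends uniquely to an $\Omega$-coloring of the ``after'' side that agrees with it outside $B$, and vice versa; this gives a bijection $\mathrm{Col}_\Omega(D)\to\mathrm{Col}_\Omega(D')$, hence equality of the coloring numbers. I would organize the verification exactly along the trichotomy recalled just before the lemma: the moves $B1$, $B2$, $BT$ each involve a sheet meeting itself along a double point curve that terminates at a branch point, and the relevant coloring equation there reduces to $C(s)*C(s)=C(s)$, which holds by (Q1); the moves $D1$ and $D2$ create or destroy double point curves in canceling pairs (or push a sheet through another), and the consistency of colors across the new configuration is exactly the statement that the equation $c*b=a$ has a unique solution $c$, which is (Q2). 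In each case ``unique extension'' is what makes the correspondence a bijection rather than merely a surjection, and that uniqueness is precisely where (Q2) (and, for the degenerate branch-point sheets, (Q1)) is used.

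The only point requiring a little care — and the main obstacle — is the type $BT$ move, since it involves a branch point and a triple point simultaneously, so one must be sure that no instance of axiom (Q3) is secretly needed there. The resolution is that at the branch point on the self-intersecting sheet two of the three sheets meeting at the triple point coincide, so the triple-point coloring relations that would ordinarily require the compatibility $(a*b)*c=(a*c)*(b*c)$ degenerate to relations already implied by (Q1) and (Q2) alone; thus $BT$, like $B1$ and $B2$, is handled without (Q3). I would spell this degeneration out explicitly with the local picture of the $BT$ move, labeling the (at most) three distinct colors involved and checking that the coloring condition before and after the move amounts to the same system of equations in $\Omega$. With $BT$ thus disposed of, the five-move invariance is complete, and the lemma follows. (Note that $T1$ and $T2$ genuinely use (Q3), which is why the hypothesis that $\Omega$ fails (Q3) is what makes the lemma a useful tool rather than a triviality — but that observation is not needed for the proof itself.)
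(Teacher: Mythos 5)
Your proposal is correct and follows essentially the same route as the paper: the paper's proof simply invokes the standard fact (recalled just before the lemma, with reference to the literature) that invariance under $B1$, $B2$, $BT$ uses only (Q1), invariance under $D1$, $D2$ uses only (Q2), and only $T1$, $T2$ require (Q3), then concatenates the resulting bijections along a $T1$-, $T2$-free sequence. Your write-up merely spells out the move-by-move verification (including the degeneration at the $BT$ move) that the paper delegates to a citation.
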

\begin{proof}
Since $D$ and $D'$ are not $\{T1, T2\}$-dependent, there exists a finite sequence 
of Roseman moves without $T1$- and $T2$-moves. 
The quandle axiom (Q3) affects only the invariance by $T1$- and $T2$-moves, 
and hence, $\#\mathrm{Col}_\Omega(D)=\#\mathrm{Col}_\Omega(D')$ holds.
\end{proof}

\subsection{Proof of Theorem \ref{thm:T1-T2}}\label{subsec:proof}
Let $X$ be the algebra composed of the set $\{0,1,2\}$ with the binary operation 
$*:X \times X \rightarrow X$ defined by the following table.
$$
\begin{array}{rc}
X:
&
\begin{tabular}{c||c|c|c}
  & 0 & 1 & 2\\ \hline \hline
0 & 0 & 2 & 1\\ \hline
1 & 1 & 1 & 0\\ \hline
2 & 2 & 0 & 2\\
\end{tabular}
\end{array}
$$
In this table, the $(i+1,j+1)$-entry means $i*j$.
The algebra $X$ satisfies the quandle axioms (Q1) and (Q2) but not (Q3).
For example, $(0*1)*2=2*2=2$ and $(0*2)*(1*2)=1*0=1$.
Since the algebra $X$ is involutory, we can apply Lemma \ref{lem:T1-T2} to 
unoriented diagrams of a (possibly non-orientable) surface-link. 
We note that Z.~Cheng and H.~Gao \cite{CG} found the algebra $X$ independently 
in the study of classical knot diagrams. They used $X$-coloring numbers to investigate 
the independence of Reidemeister moves of type III.

\begin{proposition}\label{prop:1}
Let $D_0$ be the trivial diagram of the trivial $S^2$-knot, that is, $D_0$ is the standard $2$-sphere in $\mathbb{R}^3$. 
There exists a diagram $D_1$ of the trivial $S^2$-knot such that the pair of $D_0$ and $D_1$ is $\{T1,T2\}$-dependent, and they have no triple points.
\end{proposition}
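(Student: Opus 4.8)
The plan is to exploit Lemma~\ref{lem:T1-T2} together with the algebra $X$: since $X$ satisfies (Q1) and (Q2) but not (Q3), if we can produce a diagram $D_1$ of the trivial $S^2$-knot with $\#\mathrm{Col}_X(D_1) \neq \#\mathrm{Col}_X(D_0)$, then $D_0$ and $D_1$ cannot be $\{T1,T2\}$-dependent's negation --- that is, they must be $\{T1,T2\}$-dependent. First I would compute $\#\mathrm{Col}_X(D_0)$: the trivial $2$-sphere $D_0$ has a single sheet and no double point curves, so every map from $\mathcal{S}_{D_0}$ to $X$ is an $X$-coloring, giving $\#\mathrm{Col}_X(D_0) = 3$. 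So it suffices to build a triple-point-free diagram $D_1$ of the unknotted $S^2$ whose number of $X$-colorings is different from $3$.

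The construction of $D_1$ is the crux. The natural idea is to take the trivial $2$-sphere and perform a finger move (a push of one sheet through another) to create a pair of double point curves forming a small "tube through a sheet" configuration; such local surgeries can be done without introducing triple or branch points and without changing the surface-link type. Concretely, one introduces a connected double-decker structure: a diagram built from a trivial sphere together with a small embedded handle-like finger whose shadow creates two parallel double curves, so that $D_1$ has, say, three or four sheets linked by a few double point curves. Then I would set up the coloring equations: each double point curve imposes one relation $C(s_i) * C(s_j) = C(s_k)$ among the sheet colors, and I count solutions in $X$. The design goal is to choose the finger configuration so that these relations, using the specific (non-associative, non-(Q3)) multiplication table of $X$, cut the number of colorings down from $3^{(\#\text{sheets})}$ to something not equal to $3$ --- for instance, forcing the over-sheet and the two pieces of the under-sheet to satisfy $a = a*b$ with $b$ ranging, which in $X$ restricts $b$ (e.g. $0*1 = 2 \neq 0$, so not every $b$ fixes a given $a$).

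The key steps, in order: (1) describe the explicit finger/tube modification turning $D_0$ into $D_1$, verifying it is a sequence of Roseman moves \emph{not} of type $T1$ or $T2$ (only $D$-type and possibly $B$-type moves, or really just an ambient isotopy-plus-$D2$ creation) so that $D_1$ genuinely represents the trivial $S^2$-knot; (2) enumerate the sheets of $D_1$ and the coloring relations contributed by each double point curve; (3) solve the system over $X$ and check $\#\mathrm{Col}_X(D_1) \neq 3$; (4) invoke Lemma~\ref{lem:T1-T2} in contrapositive form to conclude $\{T1,T2\}$-dependence; (5) observe by construction that $D_1$ has no triple points.

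The main obstacle I anticipate is step (1) combined with step (3): one must choose the local modification carefully so that it is simultaneously (a) triple-point-free, (b) realizable by non-$T$ Roseman moves only (to keep $F$ trivial and to make the argument non-circular), and (c) "coloring-nontrivial" in the sense that the $X$-relations it imposes actually change the count. Because $X$ is not a quandle, the coloring number is \emph{not} a surface-link invariant, so the delicate point is certifying that $D_1$ really is a diagram of the \emph{trivial} $S^2$-knot while its $X$-coloring number has moved away from $3$; this forces the construction to keep the underlying isotopy type transparently trivial (e.g. a canceling pair of fingers, or a diagram obtained by a visibly trivial Morse rearrangement) even though the diagram-level combinatorics are rich enough to detect the failure of (Q3).
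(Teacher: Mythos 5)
Your overall strategy---compute $\#\mathrm{Col}_X(D_0)=3$, exhibit a triple-point-free diagram $D_1$ of the trivial $S^2$-knot with $\#\mathrm{Col}_X(D_1)\neq 3$, and invoke Lemma~\ref{lem:T1-T2}---is exactly the paper's, and your computation for $D_0$ is correct. But your plan for constructing $D_1$ contains a self-defeating step. In step (1) you propose to certify that $D_1$ represents the trivial $S^2$-knot by exhibiting a sequence of Roseman moves from $D_0$ to $D_1$ avoiding types $T1$ and $T2$. If such a sequence existed, then by the very mechanism behind Lemma~\ref{lem:T1-T2} (the $X$-coloring number is preserved by all Roseman moves other than $T1$ and $T2$, since $X$ satisfies (Q1) and (Q2) and only (Q3) can fail) you would be forced to have $\#\mathrm{Col}_X(D_1)=\#\mathrm{Col}_X(D_0)=3$, contradicting your design goal (c). Your requirements (b) and (c) are mutually exclusive: any diagram you can reach from $D_0$ without $T$-moves is invisible to the $X$-coloring count. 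The triviality of $D_1$ must therefore be certified by other means. The paper does this by taking $D_1$ to be the diagram obtained by spinning an explicit \emph{unknotted} tangle diagram (Figure~\ref{fig:trivial_2-knot}): unknotting the tangle by an isotopy rel endpoints shows the spun sphere is trivial, while the crossings of the tangle persist in $D_1$ as double point curves; the point of the proposition is precisely that any Roseman sequence realizing this isotopy must pass through diagrams \emph{with} triple points even though $D_0$ and $D_1$ themselves have none.

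Separately, the computational heart of the proof is missing: you never exhibit a concrete $D_1$ nor verify that its $X$-coloring count differs from $3$, only a ``design goal.'' Note that a single finger push (a $D1$-type double point circle) cannot do the job, since the relation $C(s_i)*C(s_j)=C(s_k)$ along the new circle determines the new sheet's color uniquely and the count is unchanged---consistent with the observation above. One needs a diagram whose double point curves impose interacting relations; for the paper's spun tangle one finds $3$ trivial and $4$ nontrivial $X$-colorings, so $\#\mathrm{Col}_X(D_1)=7\neq 3$, and Lemma~\ref{lem:T1-T2} then yields the $\{T1,T2\}$-dependence. Without such an explicit diagram and count, the argument is incomplete.
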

\begin{proof}
Let $D_1$ be the $S^2$-knot diagram obtained by spinning the tangle diagram 
in Figure~\ref{fig:trivial_2-knot}. 
\begin{figure}[htbp]\begin{center}
$D_0$: \ \begin{minipage}{0.17\hsize}
\includegraphics[width=\hsize]{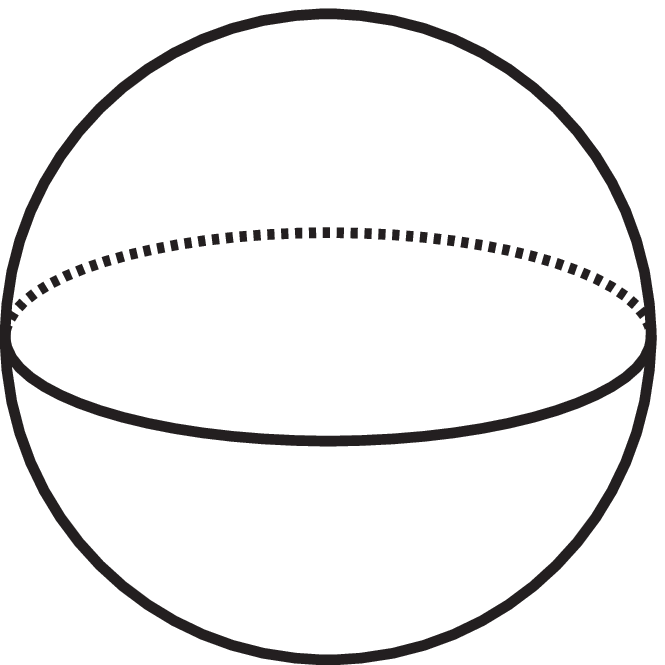}
\end{minipage} \qquad 
$D_1$: \ \begin{minipage}{0.50\hsize}
\includegraphics[width=\hsize]{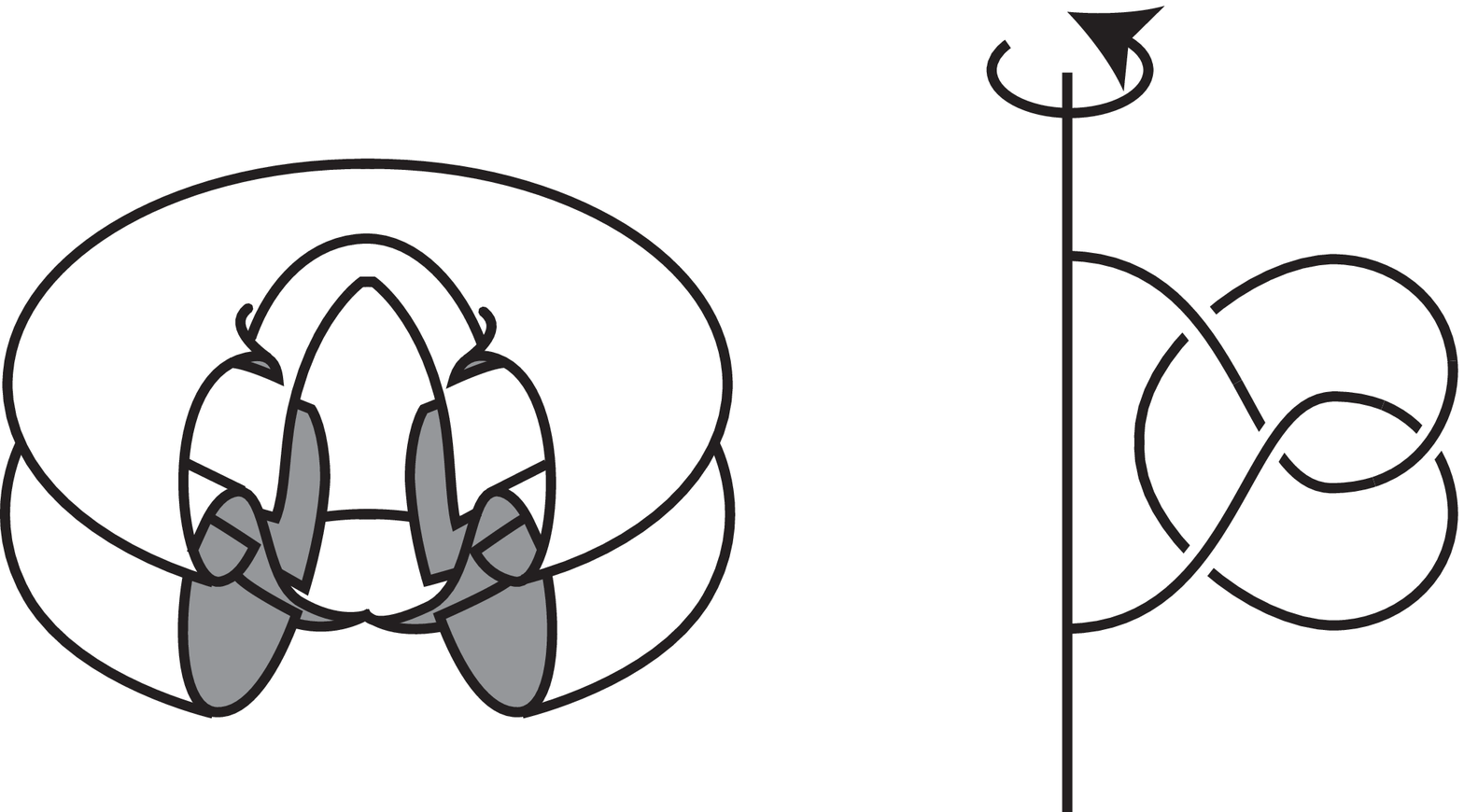}
\end{minipage}
\caption{Two diagrams $D_0$ and $D_1$ of the trivial $S^2$-knot}
\label{fig:trivial_2-knot}
\end{center}\end{figure}
Since the tangle diagram in Figure~\ref{fig:trivial_2-knot} represents the unknotted tangle, 
$D_1$ represents the trivial $S^2$-knot. Moreover, $D_0$ and $D_1$ have no triple points.
Since $D_0$ consists of a single sheet and the algebra $X$ consists of three elements, 
$D_0$ has three trivial $X$-colorings, and hence we have $\#\mathrm{Col}_X(D_0)=3$. 
On the other hand, $D_1$ has three trivial $X$-colorings and four non-trivial ones, 
and hence we have $\#\mathrm{Col}_X(D_1)=7$. 
Thus, we have $\#\mathrm{Col}_X(D_0) \neq \#\mathrm{Col}_X(D_1)$, 
which implies that the pair of $D_0$ and $D_1$ is $\{T1,T2\}$-dependent 
by Lemma \ref{lem:T1-T2}. 
\end{proof}

\begin{remark}\label{rem:T1}
Since there exists a finite sequence of Roseman moves of types $B1$, $B2$ and $T1$ 
between $D_0$ and $D_1$ above, the pair of them is not $\{T2\}$-dependent. 
It is therefore natural to ask the following question: 
Is the pair of $D_0$ and $D_1$ above $\{T1\}$-dependent? (See Problem~\ref{prob:T1}.) 
\end{remark}

\begin{proof}[Proof of Theorem \ref{thm:T1-T2}]
For any diagram $D$ of a surface-link $F$, we set a diagram $D'$ to be a connected sum $D \sharp D_1$ of $D$ and $D_1$ as in Figure \ref{fig:piping}, where $D_1$ is the diagram of the trivial $S^2$-knot in Figure \ref{fig:trivial_2-knot}. 
\begin{figure}[htbp]\begin{center}
$D$: \ \begin{minipage}{0.11\hsize}
\includegraphics[width=\hsize]{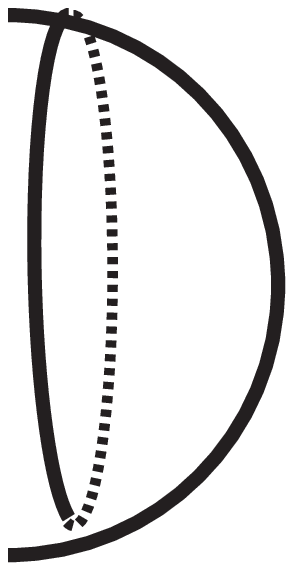}
\end{minipage} {\Huge $\rightsquigarrow$} \quad 
$D'$: \ \begin{minipage}{0.41\hsize}
\includegraphics[width=\hsize]{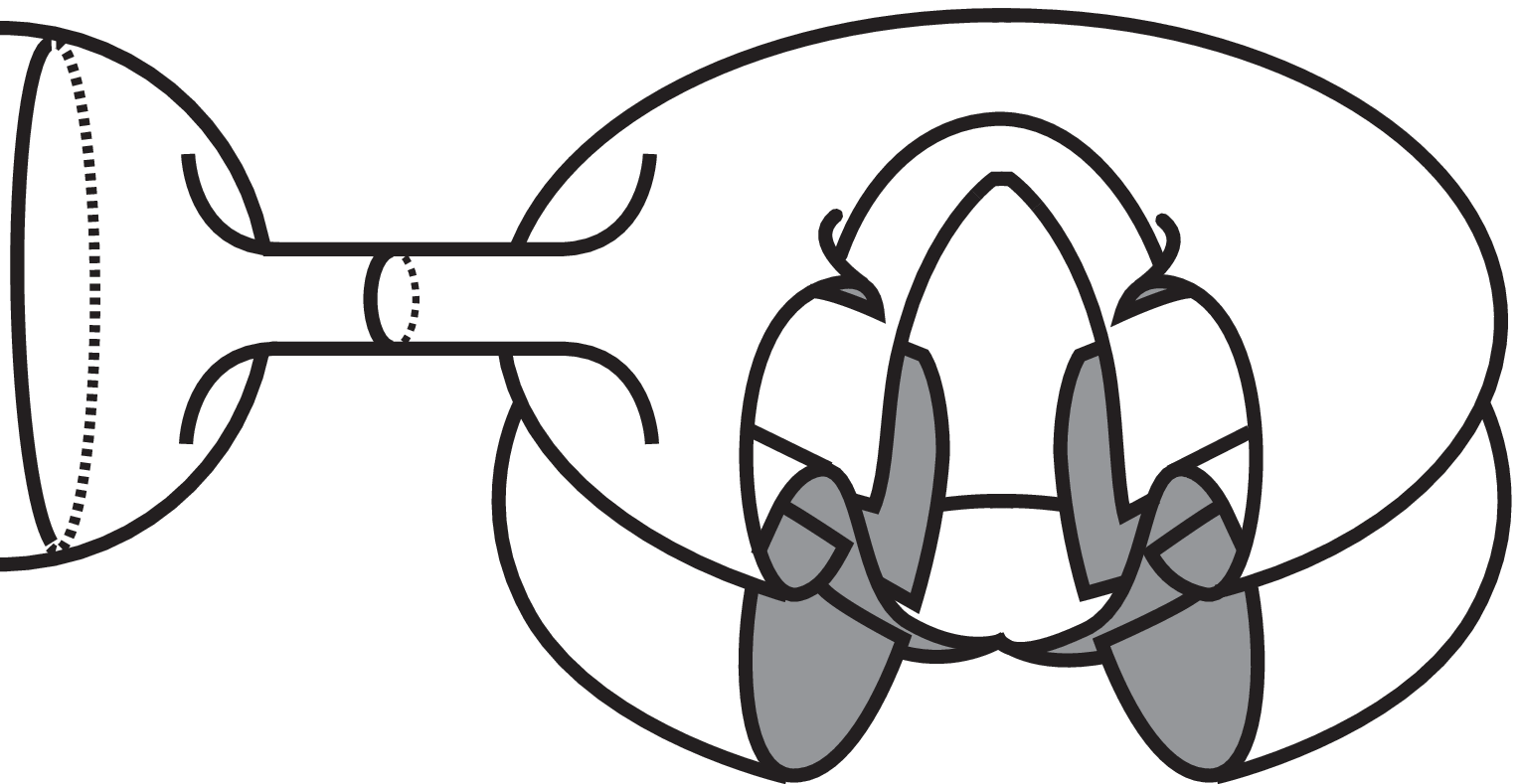}
\end{minipage}
\caption{The diagrams $D$ and $D'=D \sharp D_1$ of the surface-link $F$}
\label{fig:piping}
\end{center}\end{figure}
Then $D'$ is a diagram of $F$ and the number of triple points of $D$ is equal to that of $D'$. 
Let $s_1$ be the sheet of $D_1$ where we have performed the connected sum. 
Then for each element, say $a$, in $X$, there are at least two $X$-colorings of $D_1$ 
such that the sheet $s_1$ receives the color $a$. 
Hence the $X$-coloring number of $D'$ is at least twice of that of $D$, that is, 
$\#\mathrm{Col}_X(D') \geq 2 \times \#\mathrm{Col}_X(D) > \#\mathrm{Col}_X(D) \geq 3$. 
By Lemma \ref{lem:T1-T2}, we conclude that the pair of $D$ and $D'$ is $\{T1,T2\}$-dependent.
\end{proof}

\begin{remark}
Theorem~\ref{thm:T1-T2} is a generalization of Jab{\l}onowski's 
result \cite[Theorem~1.3]{Jab}. 
However we cannot prove that the pair of his surface-link diagrams 
is $\{T1,T2\}$-dependent by using $X$-coloring number.  
\end{remark}

\section{$\{T2\}$-dependent diagram pair}\label{sec:T2}

J.S. Carter et al.~\cite{CESS06} studied 
the number of Reidemeister moves of type III needed for two diagrams of the same classical knot 
using a modification of quandle cocycle invariants. 
In this section, we study $\{T2\}$-dependences of equivalent $S^2$-knot diagrams 
using a similar idea to the one in \cite{CESS06}. 

\subsection{Multi-set $\Phi_\theta (D)$} 
Let $D$ be a diagram of an oriented surface-link and  $\tau$ a triple point of $D$.
For a small neighborhood of $\tau$, the complement of $D$ is divided into eight regions. 
We denote by $R$ one of the eight regions from which all co-orientations 
of the three sheets point outward. 
Let $s_T$, $s_M$ and $s_B$ be the top, middle and bottom sheets of $\tau$ respectively, 
which bounds the region $R$. 
Note that when three sheets form a triple point, they have positions {\it top}, {\it middle} and {\it bottom} with respect to the  height information of the $4$th coordinate dropped by the projection from $\mathbb R^4$ to $\mathbb R^3$.

Let $Q$ be a quandle and $A$ an abelian group. We set a function $\theta: Q^3  \to A$. 
For a $Q$-coloring $C:\mathcal{S}_D \to Q$ of the diagram $D$, 
the (Boltzmann) weight $B_\theta (\tau, C)$ of $\tau$ is defined to be $\varepsilon \, \theta(C(s_B), C(s_M), C(s_T)) \in A$, where $\varepsilon = +1$ if the co-orientations of $s_T$, $s_M$ and $s_B$ in this order matches the orientation of $\mathbb R^3$ and $\varepsilon = -1$ otherwise. 
We denote by $W_\theta (D,C)$ the sum of the weights of all triple points of $D$, that is,  
\[
W_\theta (D,C) = \sum_{\tau\in \{\mbox{triple points of $D$}\} } B_\theta (\tau, C) \in A.
\]

Let $D$ and $D'$ be oriented surface-link 
diagrams such that  $D'$ is obtained from $D$ by a single $T2$-move. For a $Q$-coloring $C$ of $D$, there is a unique $Q$-coloring $C'$ of $D'$ such that these two $Q$-colorings coincide in the complement of the $3$-ball where the $T2$-move is performed. For such pairs of $(D, C)$ and $(D', C')$, we have the following.  
\begin{lemma}\label{lemma:condition(ii)}{\rm (cf. \cite{CJKLS03})}
There exist some $a,b,c,d \in Q$ such that  
\[
\begin{array}{rl}
W_\theta (D,C) -W_\theta (D',C') &=\pm\big[ \theta(a,c,d) - \theta(a*b,c,d)-\theta(a,b,d)\\[2mm]
&\ \ \ \ +\theta(a*c,b*c,d)+\theta(a,b,c)-\theta(a*d,b*d,c*d)\big]
\end{array}
\]
for the pairs of $Q$-colored diagrams $(D, C)$ and $(D', C')$ related by a single $T2$-move. 
\end{lemma}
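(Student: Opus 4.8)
The plan is to localize the difference $W_\theta(D,C)-W_\theta(D',C')$ to the $3$-ball $B$ in which the $T2$-move takes place, and then to evaluate it by reading off the colors and co-orientations of the triple points of $D$ and of $D'$ inside $B$. Since $D$ and $D'$ coincide outside $B$ and, by construction, the colorings $C$ and $C'$ coincide outside $B$, every triple point lying outside $B$ contributes the same Boltzmann weight to $W_\theta(D,C)$ and to $W_\theta(D',C')$, so these contributions cancel in the difference. Hence
\[
W_\theta(D,C)-W_\theta(D',C')=\sum_{\tau\subset B}B_\theta(\tau,C)-\sum_{\tau'\subset B}B_\theta(\tau',C'),
\]
where the sums range over the triple points of $D$, respectively $D'$, contained in $B$.

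Next I would fix a standard local model for the $T2$-move. Four sheets take part in it; let $a,b,c,d\in Q$ be their colors, read off in the region of $B$ toward which all four co-orientations point outward. Using the coloring rule $C(s_i)*C(s_j)=C(s_k)$ of Figure~\ref{fig:coloring_condition} to propagate $a,b,c,d$ across the double point curves inside $B$ --- this propagation being consistent, and determining $C'$ uniquely, as already noted above --- one finds that the triple points occurring on the two sides of the move have, as their $(\text{bottom},\text{middle},\text{top})$ color triples, exactly the six triples appearing in the statement: $(a,c,d)$, $(a*b,c,d)$, $(a,b,d)$, $(a*c,b*c,d)$, $(a,b,c)$, and $(a*d,b*d,c*d)$. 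Comparing, at each such triple point, the ordered co-orientations of its top, middle, and bottom sheets with the fixed orientation of $\R^3$ attaches the sign $\varepsilon=\pm 1$ from the definition of $B_\theta$, and a careful accounting of these signs, together with the sign $+1$ (resp.\ $-1$) with which a triple point of $D$ (resp.\ of $D'$) enters the difference, shows that the six weights combine with the signs $+,-,-,+,+,-$ recorded in the statement, up to a simultaneous change of all six signs coming from the mirror version of the move. Summing then yields the asserted identity, with the overall $\pm$ depending only on the handedness of the $T2$-move. This bookkeeping is exactly the one carried out in \cite{CJKLS03} to show that quandle $3$-cocycles produce surface-link invariants; here we keep the formula rather than the cocycle conclusion drawn there, and we note in passing that the bracket is nothing but the value at $(a,b,c,d)$ of the quandle $3$-coboundary of $\theta$, so that $W_\theta$ is invariant under $T2$-moves precisely when $\theta$ is a $3$-cocycle.

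The only delicate point --- and the main obstacle --- is the sign bookkeeping. One must commit to definite conventions for the co-orientations of the four sheets in the local model and for the orientation of $\R^3$, verify that the colors indeed propagate to the six triples listed above (this is where the quandle axioms are used, and where the existence of the induced coloring $C'$ is exploited), and check that the Boltzmann weights assemble with the pattern $+,-,-,+,+,-$ rather than some other distribution of signs. Once a normalization is fixed this is a finite, routine verification, and the freedom in that choice of normalization --- together with the two mirror forms of the tetrahedral move --- is exactly what accounts for the undetermined overall $\pm$ in the statement.
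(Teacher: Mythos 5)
Your outline is correct and is essentially the argument the paper relies on: the paper gives no proof of this lemma, deferring entirely to \cite{CJKLS03}, and your localization to the $3$-ball followed by reading off colors and signs at the local triple points is precisely the computation carried out there. The one point worth making explicit in your sketch is that each side of the tetrahedral move contains four triple points, so eight weights enter the difference; the two coming from the triple point of the three sheets other than the one colored $a$ carry identical color triples on both sides and cancel, which is how the eight terms reduce to the six in the bracket.
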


We denote by $\Phi_\theta (D)$ the multi-set
$\{ W_\theta (D,C) ~|~ C\in {\rm Col}_Q(D)\}$. 
For an unoriented diagram $D$ of an orientable surface-link, 
by considering all possible orientations of $D$, we define the multi-set (of multi-sets)
$\Phi_\theta^{\rm unori} (D)$ by 
\[
\{\Phi_\theta (\vec{D})~|~\vec{D}\in \{\mbox{oriented diagrams representing $D$}\} \}.
\]

\subsection{Quandle $3$-cocycle condition and $\{T2\}$-dependence} 
For a quandle $Q$ and an abelian group $A$, 
a function $\theta: Q^3 \to A$ is called a {\it quandle $3$-cocycle} 
if it satisfies the following conditions.
\begin{itemize}
\item[(i)] For any $a, b\in Q$, $\theta (a,a,b) = 0$ and $\theta(a,b,b)=0$.
\item[(ii)] For any $a,b,c,d \in Q$, 
\[
\begin{array}{l}
 \theta(a,c,d) - \theta(a*b,c,d)-\theta(a,b,d)\\
\hspace{1cm}+\theta(a*c,b*c,d)+\theta(a,b,c)-\theta(a*d,b*d,c*d)=0.
\end{array}
\]
\end{itemize} 
The conditions (i) and (ii) are called the {\it quandle $3$-cocycle condition}s, 
which are obtained from  quandle cohomology theories \cite{CJKLS03}.

Lemma~\ref{lemma:condition(ii)} shows that the quandle cocycle condition (ii) coincides with the difference of the sum of  the weights for two $Q$-colored surface-link 
diagrams  related by a single $T2$-move. This implies that when we set a function $\theta: Q^3 \to A$ to satisfy the quandle cocycle condition (ii), the multi-set $\Phi_\theta (D)$ for a diagram $D$ of an oriented surface-link is unchanged under $T2$-moves. 
Additionally, we can easily check that the quandle $3$-cocycle condition (ii) does not affect the other types of Roseman moves. 
We can similarly see that the quandle $3$-cocycle condition (i) guarantees the invariance of $\Phi_\theta (D)$ under $BT$-moves. 
It is shown in \cite{CJKLS03} that if the function $\theta$ satisfies the quandle $3$-cocycle conditions, the multi-set  $\Phi_\theta (D)$ is independent of choice of diagram, and thus, it is an invariant of oriented surface-links. 
Now, we consider the case where a function $\theta: Q^3 \to A$ 
satisfies the quandle $3$-cocycle condition (i) (but not (ii)).  
\begin{proposition}\label{prop:T2-cocycle} 
For a quandle $Q$ and an abelian group $A$, we set a function $\theta: Q^3 \to A$ satisfying the  quandle $3$-cocycle condition {\rm (i)} {\rm (}but not {\rm (ii)}{\rm )}.
Let $D$ and $D'$ be oriented diagrams which represent the same oriented surface-link. 
If $D$ and $D'$ are not $\{T2\}$-dependent,  
then we have $\Phi_\theta (D) = \Phi_\theta (D')$. 
\end{proposition}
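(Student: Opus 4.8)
The plan is to reduce the statement, via its hypothesis, to the invariance of $\Phi_\theta$ under the six Roseman moves other than $T2$. Since $D$ and $D'$ are not $\{T2\}$-dependent, there is a finite sequence of Roseman moves (together with ambient isotopies of $\R^3$) carrying $D$ to $D'$ in which no move is of type $T2$; so it suffices to show $\Phi_\theta(\widetilde D) = \Phi_\theta(\widetilde D')$ whenever $\widetilde D'$ is obtained from $\widetilde D$ by a single move of type $B1$, $B2$, $D1$, $D2$, $T1$ or $BT$ (ambient isotopies change nothing). Because $Q$ is a genuine quandle, all three quandle axioms hold for it, so for each such move the standard correspondence $C \mapsto C'$ between $Q$-colorings --- a sheet lying outside the modified $3$-ball keeps its color, and the colors inside are forced by the coloring condition --- is a bijection $\mathrm{Col}_Q(\widetilde D) \to \mathrm{Col}_Q(\widetilde D')$; this is precisely the invariance of $\#\mathrm{Col}_Q$ under all Roseman moves recalled above (cf.~\cite{CKS-book}). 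It therefore remains only to check, move by move, that this bijection preserves the weight $W_\theta$.

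First I would dispose of $B1$, $B2$, $D1$ and $D2$: none of these moves involves a triple point, so the multiset of triple points of $\widetilde D$ is carried bijectively, with all its height and orientation data, onto that of $\widetilde D'$, and $W_\theta(\widetilde D, C) = W_\theta(\widetilde D', C')$ term by term, with no property of $\theta$ used. Next, for a $T1$-move the triple points inside the modified ball do change, but --- as in the standard picture of this move --- they occur in canceling pairs: two triple points meeting the same three sheets in the same top/middle/bottom roles but with opposite orientation signs $\varepsilon$, so their weights sum to $0$ for every $Q$-coloring. As for the four moves above, the argument here is the part of the well-definedness proof for quandle cocycle invariants \cite{CJKLS03} that does not invoke cocycle condition (ii).

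The only move where the hypothesis on $\theta$ is actually used is $BT$. A $BT$-move creates or deletes a single triple point $\tau$ situated next to a branch point, and in the local picture of a branch point a single sheet occupies two of the three positions (top, middle, bottom) of $\tau$ --- and, crucially, two \emph{adjacent} positions. That sheet therefore carries a single color $x$ while the third sheet carries some color $y$, so $(C(s_B), C(s_M), C(s_T))$ equals $(x,x,y)$ or $(y,x,x)$, whence $B_\theta(\tau, C) = \varepsilon\,\theta(x,x,y)$ or $\varepsilon\,\theta(y,x,x)$, both of which vanish by the quandle $3$-cocycle condition (i). Hence $W_\theta$ is again unchanged, and running these observations along the $T2$-free sequence gives $\Phi_\theta(D) = \Phi_\theta(D')$. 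Note that cocycle condition (ii), which may fail for $\theta$, is never needed, precisely because no $T2$-move occurs.

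I expect the $BT$ case to be the main obstacle: one must verify that the triple point produced by a $BT$-move always falls into one of the \emph{degenerate color patterns} annihilated by condition (i) --- that is, one must rule out the pattern $(x,y,x)$, with bottom color equal to top color but distinct middle color, which condition (i) does not kill --- and this forces a careful inspection of the height information in the local picture of a branch point, specifically of which positions the doubled sheet can occupy. The remaining ingredients --- bijectivity of the coloring correspondence and the sign-and-height bookkeeping for $T1$ --- are routine and are already contained, at least implicitly, in \cite{CJKLS03}.
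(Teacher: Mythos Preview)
Your proof is correct and follows the same approach as the paper, which simply notes that a $T2$-free sequence exists and that condition~(ii) is needed only for $T2$-invariance, relying on the preceding paragraph's discussion of the roles of conditions (i) and (ii). You have essentially unpacked that discussion move by move; in particular, your analysis of the $BT$ case (that the two coincident positions at the created triple point are adjacent, so condition~(i) applies) and of the $T1$ case (sign-cancellation with no condition on $\theta$) are exactly the facts the paper invokes without spelling out.
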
 
\begin{proof}
Since $D$ and $D'$ are not $\{T2\}$-dependent, 
there exists a finite sequence of  Roseman moves without $T2$-moves. 
The quandle $3$-cocycle condition (ii) affects  only the invariance by $T2$-moves, 
and hence, $\Phi_\theta (D) = \Phi_\theta (D')$ holds.
\end{proof}

For an unoriented diagram of an orientable surface-link, we have the following. 
\begin{corollary}\label{cor:T2-cocycle}
For a quandle $Q$ and an abelian group $A$, we set a function $\theta:  Q^3 \to A$ 
satisfying the quandle $3$-cocycle condition {\rm (i)} {\rm (}but not {\rm (ii)}{\rm )}.
Let $D$ and $D'$ be unoriented diagrams which represent the same orientable surface-link. 
If $D$ and $D'$ are not $\{T2\}$-dependent,  
then we have $\Phi_\theta ^{\rm unori} (D) = \Phi_\theta^{\rm unori} (D')$. 
\end{corollary}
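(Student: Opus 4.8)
The plan is to derive this directly from Proposition~\ref{prop:T2-cocycle} by running that argument simultaneously over all choices of orientation. First I would use the hypothesis that $D$ and $D'$ are not $\{T2\}$-dependent to fix a finite sequence of Roseman moves from $D$ to $D'$ containing no $T2$-move. Each such move is supported in a $3$-ball and leaves the diagram unchanged outside that ball, and the surface-link under consideration is orientable; hence every move in the sequence induces a canonical bijection between the set of orientations of the diagram just before it and the set of orientations of the diagram just after it (equivalently, the orientations of any diagram of an orientable surface-link $F$ are in canonical bijection with the orientations of $F$, independently of the diagram). Composing these bijections along the whole sequence produces a bijection $\vec{D}\leftrightarrow\vec{D'}$ between the oriented diagrams representing $D$ and those representing $D'$.

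Next I would observe that whether a move is of type $T2$ depends only on its underlying unoriented local picture, so for each orientation the induced ``lifted'' sequence from $\vec{D}$ to the corresponding $\vec{D'}$ is again a finite sequence of Roseman moves with no $T2$-move. Thus $\vec{D}$ and $\vec{D'}$ are oriented diagrams of one and the same oriented surface-link that are not $\{T2\}$-dependent, and Proposition~\ref{prop:T2-cocycle} yields $\Phi_\theta(\vec{D})=\Phi_\theta(\vec{D'})$.

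Finally, since $\Phi_\theta^{\rm unori}(D)$ is by definition the multi-set $\{\Phi_\theta(\vec{D})\mid \vec{D}\text{ representing }D\}$, and likewise for $D'$, the bijection $\vec{D}\leftrightarrow\vec{D'}$ constructed above matches these two multi-sets term by term, giving $\Phi_\theta^{\rm unori}(D)=\Phi_\theta^{\rm unori}(D')$. The only step that I expect to require genuine care is the first one: checking that a $T2$-free Roseman sequence between unoriented diagrams lifts, orientation by orientation, to a $T2$-free sequence between oriented diagrams with the orientation correspondence being an honest bijection. This is the standard fact that orientations propagate canonically through local moves on diagrams of orientable surface-links, so once stated cleanly it is routine.
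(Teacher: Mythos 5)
Your proposal is correct and matches the paper's (implicit) argument: the paper states this as an immediate corollary of Proposition~\ref{prop:T2-cocycle}, and the intended justification is exactly the one you spell out — orientations of a diagram of an orientable surface-link correspond canonically to orientations of the surface-link itself, so a $T2$-free Roseman sequence lifts orientation by orientation, and the resulting bijection between oriented diagrams matches the two multi-sets $\Phi_\theta^{\rm unori}$ term by term. Your write-up simply makes explicit the routine lifting step that the paper leaves to the reader.
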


\subsection{Proof of Theorem~\ref{thm:T2}}
To prove Theorem~\ref{thm:T2}, 
we will construct two concrete $S^2$-knot diagrams by using the deform-spinning method, 
which is reviewed below, defined by R.~A.~Litherland \cite{Litherland79}. 
This is a method of constructing $S^2$-knots from a classical knot such that 
a deformation can be applied during the spinning process.  
Note that the twist-spinning method by E.~C.~Zeeman \cite{Zeeman65} 
and the roll-spinning method by R.~H.~Fox \cite{Fox66} are special cases of 
the deform-spinning method.

For a classical knot $K$, consider a properly embedded arc $K_0$ in the unit  $3$-ball $B^3$ such that $K$ is obtained from $K_0$ by connecting the boundary points by a simple arc in $\partial B^3$.   
Let $f_t : B^3 \to B^3 ~(t \in [0,1])$ be an isotopy of $B^3$ rel $\partial B^3$ such that $f_{1} (K_0) = K_0$. 
Define
\[
(S^4, F) = \bigcup_{t \in [0,1] }(B^3, f_t(K_0))/\sim
\]
where $\sim $ stands for 
\[
\left\{
\begin{array}{ll}
(f_0 ( \mbox{\boldmath $x$}), 0) \sim (f_{1} ( \mbox{\boldmath $x$}), 1 ) &\mbox{ for $ \mbox{\boldmath $x$} \in B^3$,}\\
( \mbox{\boldmath $x$},  t) \sim ( \mbox{\boldmath $x$}, t') &\mbox{ for $ \mbox{\boldmath $x$} \in \partial B^3$ and $t, t' \in [0,1 ]$}.
\end{array}
\right.
\]
Then $F$ is a $2$-sphere embedded in a $4$-sphere $S^4$. Removing a point from $S^4\setminus F$, we have an $S^2$-knot $F$ in $\mathbb R^4$. We call $F$ a {\it deform-spun $S^2$-knot} of $K$. 
Note that  the family $\{f_t(K_0) \}_{t\in [0,1]}$  is a motion picture of tangles in $B^3$ which describes  $F$, and thus, any deform-spun $S^2$-knot can be described by a motion picture of tangles in $B^3$.

Let $F$ be a deform-spun $S^2$-knot. 
Let $p : \mathbb R^4 \to \mathbb R^3$ be the projection induced by the natural projection, say also $p$, from $B^3$ to $B^2$ dropping the 3rd coordinate. 
For each $t \in [0,1]$, the image $p(f_t(K_0))$ equipped with the height information 
is a tangle diagram in $B^2$, 
and the family  $\{p(f_t(K_0)) \}_{t\in [0,1]}$ with the height information is a motion picture of tangle diagrams in $B^2$ which describes  a diagram of $F$. 
Thus we can obtain a diagram of $F$ by a motion picture of tangle diagrams in $B^2$. 
Note that each  Reidemeister move of type III in a motion picture produces a triple point of the corresponding generic projection. Furthermore, for a quandle $Q$, we can also see a $Q$-colored diagram of $F$ by taking a motion picture of $Q$-colored tangle diagrams  in $B^2$.

Let $\tau^3 (3_1)$ denote the deform-spun $S^2$-knot described by the threefold repetition of the motion of tangle diagrams in Figure~\ref{move1}, where the tangle diagrams in Figure~\ref{move1} represent the left-handed trefoil knot  $3_1$. Let $\rho^{\frac{1}{2}} (4_1)$ be the deform-spun $S^2$-knot described by the motion picture of tangle diagrams in Figure~\ref{move2}, where the tangle diagrams in Figure~\ref{move2} represent the figure-eight knot  $4_1$. We note that $\tau^3 (3_1)$ is usually called the {\it $3$-twist-spun $S^2$-knot} of $3_1$ and that $\rho^{\frac{1}{2}} (4_1)$ is usually called the {\it $($half$\text{-})$roll-spun $S^2$-knot} of $4_1$.
\begin{figure}[htbp]
  \begin{center}
\includegraphics[width=0.65\textwidth]{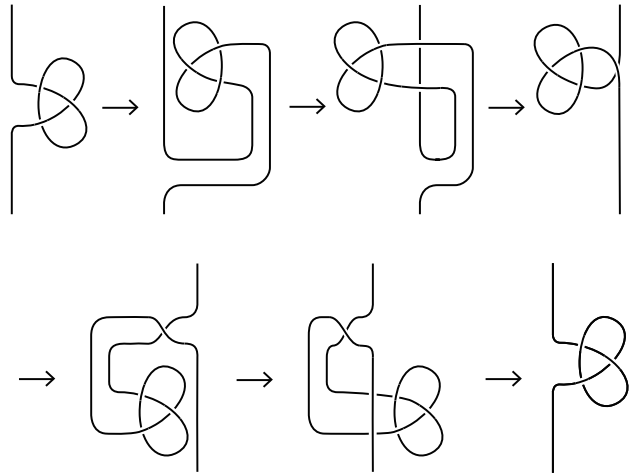}
    \caption{The motion picture of the $1$-twist of $\tau^3 (3_1)$}
    \label{move1}
  \end{center}
\end{figure}

\begin{figure}[htbp]
  \begin{center}
\includegraphics[width=0.65\textwidth]{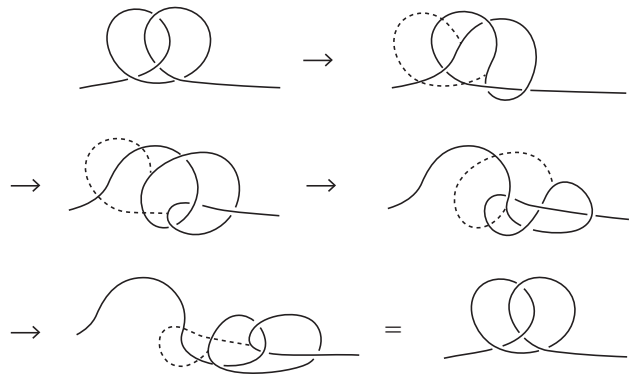}
    \caption{The motion picture of $\rho^{\frac{1}{2}} (4_1)$}
    \label{move2}
  \end{center}
\end{figure}

\textit{Proof of Theorem~\ref{thm:T2}.} 
Let $D$ denote the diagram, of $\tau^3 (3_1)$, 
described by the threefold repetition of the motion of tangle diagrams 
in Figure~\ref{move1}, 
and let $D'$ be the diagram, of $\rho^{\frac{1}{2}} (4_1)$,  
described by the motion picture of tangle diagrams in Figure~\ref{move2}. 
We note that $D$ has eighteen triple points and that $D'$ has twelve triple points. 
Since it is shown in \cite{Kanenobu,Teragaito} that two $S^2$-knots 
$\tau^3 (3_1)$ and $\rho^{\frac{1}{2}} (4_1)$ are equivalent, 
$D$ and $D'$ are also equivalent as $S^2$-knot diagrams. 
In what follows, we focus on proving 
that the pair of $D$ and $D'$ is $\{T2 \}$-dependent.

Let $S_4$ be the tetrahedron quandle given in Example~\ref{exa:S_4} 
and define a map  $\theta: (S_4)^3 \to \mathbb Z$ by $\theta (x,y,z) = (x-y) (y-z).$ 
Note that the map $\theta$ satisfies the quandle $3$-cocycle condition (i) but not (ii). 
To prove this theorem, it is enough to show 
that the multi-set $\Phi_\theta^{\rm unori}(D)$ does not coincide with 
the multi-set $\Phi_\theta^{\rm unori}(D')$ by Corollary~\ref{cor:T2-cocycle}.  
More precisely, for some oriented diagram $\vec{D'}$ of $D'$, 
we show that there exists an $S_4$-coloring $C'$ of $\vec{D'}$ 
such that the weight sum $W_\theta (\vec{D'}, C')$ is non-zero. 
On the other hand, for any oriented diagram $\vec{D}$ of $D$ and 
any $S_4$-coloring $C$ of $\vec{D}$, 
we show that the weight sum $W_\theta (\vec{D}, C)$ is zero, 
that is, $\Phi_\theta^{\rm unori} (D) =\{  0_{16}, 0_{16} \}$, 
where $0_{16}$ is the multi-set composed of $16$ zeros. 

First, consider the multi-set $\Phi_\theta^{\rm unori}(D)$. 
(We refer to \cite{Sat-02} for computation of 
quandle cocycle invariants of twist-spun $S^2$-knots.) 
Set the orientation of $D$ as shown in Figure~\ref{move1-ori} and we denote by $\vec{D}$ the oriented diagram. Note that Figure~\ref{move1-ori} shows the first $1$-twist of $D$. 
We also note that each of the deformations (M1) and (M2) produces three triple points.
For any elements $a, b \in S_4$, Figure~\ref{move1-ori} represents an $S_4$-coloring 
(of the first $1$-twist) 
of $\vec{D}$, that is, the assignment of elements of $S_4$ satisfies the $S_4$-coloring condition. 
Note that when we replace each element, say $x \in S_4$, appeared in Figure~\ref{move1-ori} by $x*a$ (resp.~$(x*a)*a$), the motion of the replaced $S_4$-colored tangle diagrams describes the $S_4$-coloring of the second (resp. third) $1$-twist of $\vec{D}$.  
We also note that the arc colored by $a*b$ 
just before the deformation (M2) in Figure~\ref{move1-ori} 
receives the color $b$ just after the same deformation, 
because $(a*b)*a=b$ holds for any $a,b \in S_4$. 
Since $((x*a)*a)*a=b$ holds  for any $x,a \in S_4$, the last $S_4$-colored tangle diagram of the third $1$-twist of $\vec{D}$ coincides with the first one in Figure~\ref{move1-ori}. 
\begin{figure}[htbp]
  \begin{center}
\includegraphics[width=0.65\textwidth]{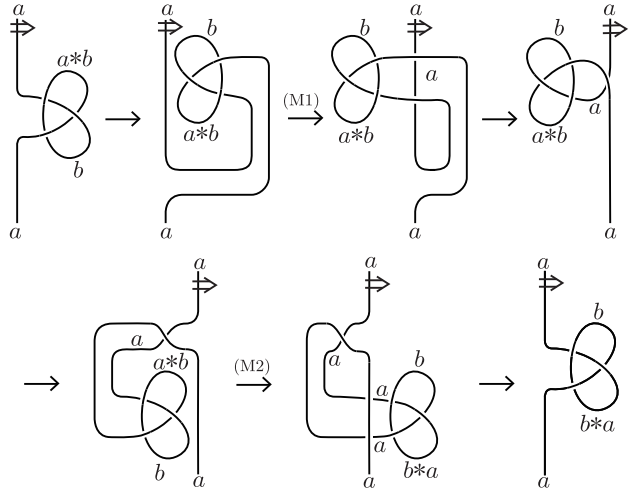}
    \caption{$S_4$-coloring of the $1$-twist of $\tau^3 (3_1)$}
    \label{move1-ori}
  \end{center}
\end{figure}

For the first $1$-twist, the sum of the weights of six triple points appeared in this step is 
\begin{equation}
\begin{aligned}
-\theta(b,b,a*b) - \theta(b,a,b) -\theta (b,a*b ,a)\hspace{2cm} \\ 
+\theta (b,a*b,a)+\theta (a,b,a) + \theta (a*b,a,a), \label{eq:1-twist}
\end{aligned}
\end{equation}
where the first row is obtained from the deformation (M1) in Figure~\ref{move1-ori} and 
the second row is from the deformation (M2) in Figure~\ref{move1-ori}.
It is easy to see that the third term and the fourth term are canceled and the first term and the last term are zero.
Since 
\[
-\theta (b,a,b) + \theta (a,b,a) = -(b-a)(a-b) + (a-b)(b-a) =0,
\]  
the formula (\ref{eq:1-twist}) is equal to $0$.
Similarly, for the second $1$-twist, we also have  
\[
\begin{array}{l}
-\theta(b*a,b*a,b) - \theta(b*a,a,b*a) -\theta (b*a,b ,a)\\[5pt]
+\theta (b*a,b,a)+\theta (a,b*a,a) + \theta (b,a,a)
=0,
\end{array}
\] 
and for the third $1$-twist, we also have 
\[
\begin{array}{l}
-\theta(a*b,a*b,b*a) - \theta(a*b,a,a*b) -\theta (a*b,b*a ,a)\\[5pt]
+\theta (a*b,b*a,a)+\theta (a,a*b,a) + \theta (b*a,a,a)
=0.
\end{array}
\]
Therefore the sum of the weight of all eighteen triple points of $\vec{D}$ is zero. 
It implies $\Phi_\theta (\vec{D}) =\{ 0_{16}\}$.
In the case where we set the reversed orientation for $D$, we can also similarly see that 
the multi-set is $\{0_{16}\}$. Hence we have $\Phi_\theta^{\rm unori}(D)=\{0_{16}, 0_{16}\}$. 

Let us consider the multi-set $\Phi_\theta^{\rm unori}(D')$. 
(We refer to \cite{IS} for computation of 
quandle cocycle invariants of roll-spun $S^2$-knots.) 
Set the orientation and the $S_4$-coloring of $D'$ as shown in Figure~\ref{move2-ori}. 
We denote by $\vec{D}'$ the oriented diagram of $D'$.
\begin{figure}[htbp]
  \begin{center}
\includegraphics[width=0.65\textwidth]{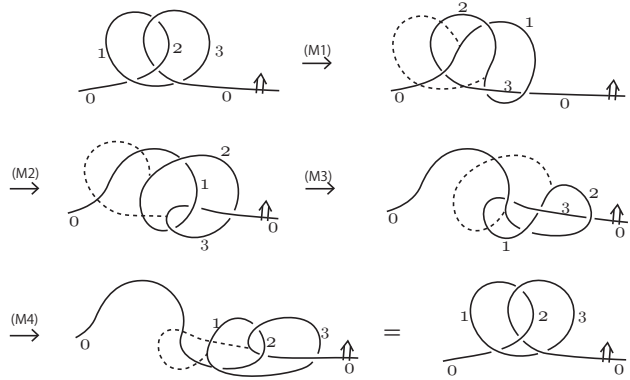}
    \caption{$S_4$-coloring of $\rho^{\frac{1}{2}} (4_1)$}
    \label{move2-ori}
  \end{center}
\end{figure}
 Note that each of the deformations (M1)-(M4) produces three triple points. 
By the direct calculation, we can see that the sum of the weights is 
\[
\begin{array}{l}
\theta(3,0,1)+\theta(1,2,1)-\theta(1,3,1)\\[0.1cm]
+\theta(0,1,2)-\theta(2,1,3)-\theta(2,3,1)\\[0.1cm]
+\theta(3,1,2)+\theta(1,3,2)-\theta(3,0,2)\\[0.1cm]
+\theta(1,0,1)-\theta(3,1,3)-\theta(1,0,2)=12,
\end{array}
\]
where the first, second, third and last rows of the left-hand side are obtained from 
the deformations (M1), (M2), (M3) and (M4) in Figure~\ref{move2-ori}, respectively.  
This implies that $\Phi_\theta (\vec{D'})$ has a non-zero element, and we have 
$\Phi_\theta^{\rm unori}(D') \neq \Phi_\theta^{\rm unori}(D) = \{0_{16}, 0_{16}\}$. 
\hfill \qed

By taking the connected sum of $D$ and $D'$ above 
with the trivial orientable surface-knot diagram, 
we can show the following in a similar way. 

\begin{corollary}
There is a pair of two diagrams of an orientable surface-knot 
such that the pair is $\{T2\}$-dependent. 
In other words, any sequence of Roseman moves between them 
must involve at least one tetrahedral move. 
\end{corollary}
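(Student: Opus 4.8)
The plan is to recycle the two diagrams $D$ and $D'$ of the $S^2$-knot $\tau^3(3_1)=\rho^{\frac12}(4_1)$ built in the proof of Theorem~\ref{thm:T2} and to connect-sum each of them with a fixed diagram $E$ of a trivial orientable surface-knot. Concretely, let $E$ be the standardly embedded torus in $\mathbb{R}^3$, viewed as a surface-knot diagram: it has no double point curves and hence no triple points and no branch points. Set $\widetilde D=D\sharp E$ and $\widetilde D'=D'\sharp E$, where in each case the connected sum is performed inside a small $3$-ball meeting the diagram in a single regular point of a sheet, disjoint from all double point curves, triple points and branch points, and the connecting tube is routed in $\mathbb{R}^4$ so that its generic projection creates no new multiple points. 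Then $\widetilde D$ and $\widetilde D'$ are diagrams of one and the same orientable surface-knot $F_0$ (the connected sum of the $S^2$-knot with the trivial torus-knot), because $D$ and $D'$ represent equivalent surface-knots; moreover $\widetilde D$ has eighteen triple points and $\widetilde D'$ has twelve, exactly as $D$ and $D'$ do.

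Next I would track the $S_4$-coloring data through this operation. Since the connecting tube meets no other sheet, the connected sum merges the chosen sheet of $D$ with the single sheet of $E$ into one sheet of $\widetilde D$ without altering any coloring condition; hence restriction gives a bijection $\mathrm{Col}_{S_4}(\widetilde D)\to\mathrm{Col}_{S_4}(D)$, and likewise $\mathrm{Col}_{S_4}(\widetilde D')\to\mathrm{Col}_{S_4}(D')$. Because $E$ contributes no triple points, the weight sum $W_\theta$ is preserved under these bijections for the induced orientations, so $\Phi_\theta(\vec{\widetilde D})=\Phi_\theta(\vec D)$, and therefore, taking both orientations, $\Phi_\theta^{\rm unori}(\widetilde D)=\Phi_\theta^{\rm unori}(D)=\{0_{16},0_{16}\}$ with $\theta(x,y,z)=(x-y)(y-z)$ as before; on the other hand $\Phi_\theta^{\rm unori}(\widetilde D')=\Phi_\theta^{\rm unori}(D')$ still contains an element with a non-zero entry, by the explicit computation in the proof of Theorem~\ref{thm:T2}. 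In particular $\Phi_\theta^{\rm unori}(\widetilde D)\neq\Phi_\theta^{\rm unori}(\widetilde D')$, so Corollary~\ref{cor:T2-cocycle} forces the pair $(\widetilde D,\widetilde D')$ to be $\{T2\}$-dependent, which is the assertion.

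The only point that needs genuine care — the ``main obstacle,'' such as it is — is the bookkeeping around the connected sum: one must verify that it can be realized by a tube that is disjoint, in the projection, from the rest of the diagram, so that no triple point is created or destroyed and the coloring/weight correspondence above is exact. This is routine and can be arranged by performing the connected sum inside a product neighborhood of a point of the spinning $2$-sphere lying away from the parts of the twist-spinning and roll-spinning constructions that are non-trivial. I would also remark that the argument goes through verbatim with $E$ replaced by a standard diagram of the trivial orientable surface-knot of any genus $g\ge 1$, yielding a $\{T2\}$-dependent pair of diagrams of an orientable surface-knot of each positive genus (the genus-$0$ case being Theorem~\ref{thm:T2} itself).
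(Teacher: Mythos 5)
Your proposal is correct and follows essentially the same route as the paper, which proves this corollary precisely by taking the connected sum of $D$ and $D'$ with a trivial orientable surface-knot diagram and rerunning the $\Phi_\theta^{\rm unori}$ computation; you have merely filled in the routine details (the tube avoiding the singular set, the coloring bijection, and the preservation of the weight sums) that the paper leaves implicit.
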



\section*{Acknowledgments}
The second author is partially supported by
Grant-in-Aid for Young Scientists (B) (No.~25800052), 
Japan Society for the Promotion of Science. 
The third author is partially supported by 
Grant-in-Aid for Scientific Research (C) (No.~26400082), 
Japan Society for the Promotion of Science.




\end{document}